\date{}
\begin{document}

\newtheorem{theorem}{Theorem}[section]
\newtheorem{corollary}[theorem]{Corollary}
\newtheorem{conjecture}[theorem]{Conjecture}
\newtheorem{lemma}[theorem]{Lemma}
\newtheorem{proposition}[theorem]{Proposition}

\theoremstyle{remark}
\newtheorem{remark}[theorem]{Remark}

\theoremstyle{remark}
\newtheorem{example}[theorem]{Example}

\theoremstyle{remark}
\newtheorem*{ack}{Acknowledgements}

\author{L{\!\!'}udov\'{\i}t Balko and J\'ulius Korba\v s}
\title{A note on the characteristic rank and related numbers}
\maketitle

\begin{abstract} This note quantifies, via a sharp inequality, an interplay between (a) the
characteristic rank of a vector bundle over a topological space $X$, (b) the
$\mathbb{Z}_2$-Betti numbers of $X$, and (c) sums of the numbers of certain partitions of
integers. In a particular context, (c) is transformed into a sum of the readily calculable
Betti numbers of the real Grassmann manifolds.

{\em 2000 Mathematics Subject Classification:} primary 57R20, secondary 05A17.

{\em Key words and phrases:} Stiefel-Whitney class, characteristic rank, Betti number,
partitions of integers, Grassmann manifold.

\end{abstract}

\section{Introduction}\label{sec:introductionandresults}
The {\em characteristic rank} of a smooth closed connected $d$-dimensional manifold $M$ was
introduced, and in some cases also calculated, in \cite{korbas2010} as the largest integer
$k$, $0\leq k\leq d$, such that each cohomology class in $H^j(M;\mathbb Z_2)$ with $j\leq k$
can be expressed as a polynomial in the Stiefel-Whitney classes of $M$; further results can
be found in \cite{balko-korbasAMH2012}. Recently, A. Naolekar and A. Thakur
(\cite{naolekarthakur}) have adapted this homotopy invariant of smooth closed connected
manifolds to vector bundles. Given a path-connected topological space $X$ and a real vector
bundle $\alpha$ over $X$, by the {\em characteristic rank of} $\alpha$, denoted
$\rm{charrank}(\alpha)$, we understand the largest $k$, $0 \leq k \leq \mathrm{dim}_{\mathbb
Z_2}(X)$, such that every cohomology class in $H^j(X;\mathbb Z_2)$, $0 \leq j \leq k$, can be
expressed as a polynomial in the Stiefel-Whitney classes $w_i(\alpha)\in H^i(X;\mathbb Z_2)$;
$\mathrm{dim}_{\mathbb Z_2}(X)$ is the supremum of all $q$ such that the cohomology groups
$H^q(X;\mathbb Z_2)$ do not vanish. We shall always use $\mathbb Z_2$ as the coefficient
group for cohomology, and so we shall write $H^i(X)$ instead of $H^i(X;\mathbb Z_2)$ and
$\mathrm{dim}(X)$ instead of $\mathrm{dim}_{\mathbb Z_2}(X)$. Of course, if $TM$ denotes the
tangent bundle of $M$, then we have $\mathrm{charrank}(TM)=\mathrm{charrank}(M)$. Results on
the characteristic rank of vector bundles over the Stiefel manifolds can be found in
\cite{knt2012}. In addition to being an interesting question in its own right, there are
other reasons for investigating the characteristic rank; one of them is its close relation to
the cup-length of a given space (\cite{korbas2010}, \cite{naolekarthakur}).

This note presents a result on the characteristic rank that may also prove useful in some
non-topological questions (for instance, in the theory of partitions; see \cite{andrewstop}).
More precisely, we quantify (Theorem \ref{th:uboundbnum}), via a sharp inequality, an
interplay between (a) the characteristic rank of a vector bundle, (b) the
$\mathbb{Z}_2$-Betti numbers of base space of this vector bundle, and (c) sums of the numbers
of certain partitions of integers. In a particular context, (c) is transformed into a sum of
the readily calculable Betti numbers of the real Grassmann manifolds $G_{n,k}$ of all
$k$-dimensional vector subspaces in $\mathbb R^n$.

\section{The result and its proof}
\label{relationscharrankversusBettinumbers}

Let $p(a,b,c)$ be the number of partitions of $c$ into \emph{at most} $b$ parts, each $\leq
a$. At the same time, given a subset $A$ of the positive integers, let $p(A,b,c)$ denote the
number of partitions of $c$ into $b$ parts each taken from the set $A$. We recall that
(\cite[6.7]{milnorstasheff}) $p(a,b,c)$ is the same as the number of cells of dimension $c$
in the Schubert cell decomposition of the Grassmann manifold $G_{a+b,b}$, or the same as the
$\mathbb Z_2$-Betti number $b_c(G_{a+b,b})=b_c(G_{a+b,a})$. Of course, for dimensional
reasons, $b_c(G_{a+b,b})=p(a,b,c)=0$ for $c > ab = \mathrm{dim}(G_{a+b,b})$. In addition, we
denote by $p(c)$ the total number of partitions of $c$. Even if not explicitly stated, $X$
will always mean a path-connected topological space.

For obvious reasons, we confine our considerations to those vector bundles having total
Stiefel-Whitney class non-trivial. For a given space $X$, an ordered subset in $\{1, 2,
\dots, \mathrm{dim}(X)\}$, with the least element (denoted by) $\nu$ and the greatest element
(denoted by) $\kappa$, will be denoted by $S_{\nu}^{\kappa}$. If, in addition, $\alpha$ is a
real vector bundle over $X$, then $S_{\nu}^{\kappa}(\alpha)$ will denote any
$S_{\nu}^{\kappa}\neq \emptyset$ such that $w_i(\alpha)=0$ for every positive $i\notin
S_{\nu}^{\kappa}$. In general, there are several possible choices of
$S_{\nu}^{\kappa}(\alpha)$: one can always take $S_{\nu}^{\kappa}(\alpha)=\{1=\nu, 2, \dots,
\mathrm{dim}(X)=\kappa\}$, but if we know, for instance, that $w_3(\alpha)=0$, then we can
also take the set $\{1=\nu, 2, 4, \dots, \mathrm{dim}(X)=\kappa\}$ in the role of
$S_{\nu}^{\kappa}(\alpha)$. Now we can state and prove our result.

\begin{theorem} \label{th:uboundbnum}
Let $\alpha$ be a real vector bundle over a path-connected topological space $X$ such that
$w(\alpha)\neq 1$, and let $\mathrm{charrank}(\alpha)\geq t$ for some $t$. Then, for every
$j\in \{1, \dots, t\}$, we have an inequality,
\begin{align}
b_j(X)\leq \sum\limits_{s=1}^{\lfloor\frac{j}{\nu}\rfloor} p(\{x\in
S_{\nu}^{\kappa}(\alpha)\vert x\leq \mu\},s,j), \label{ineq:uboundbettinum}
\end{align}
where $\mu=\mathrm{min}\{j,\kappa\}$.

In particular, if the set $\{x\in S_{\nu}^{\kappa}(\alpha)\vert x\leq \mu\}$ in
\eqref{ineq:uboundbettinum} is gapless, then \eqref{ineq:uboundbettinum} turns into
\begin{align}
b_j(X)\leq \sum_{\frac{j}{\mu}\leq s \leq \frac{j}{\nu}} b_{j-\nu s}(G_{\mu-\nu+s,s}).
\label{ineq:uboundviaGrassmannmanif}
\end{align}

\end{theorem}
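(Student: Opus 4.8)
The plan is to exploit the defining property of the characteristic rank directly. Since $j\le t\le\mathrm{charrank}(\alpha)$, every class in $H^j(X)$ is a polynomial in the Stiefel-Whitney classes $w_i(\alpha)$, so $H^j(X)$ is spanned over $\mathbb Z_2$ by the monomials of total degree $j$ in these classes. The bound for $b_j(X)=\dim_{\mathbb Z_2}H^j(X)$ will then follow by simply \emph{counting} such monomials, since a spanning family of size $N$ forces dimension $\le N$.

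First I would determine which monomials can actually occur. Because $w_0(\alpha)=1$ and $w_i(\alpha)=0$ for every positive $i\notin S_\nu^\kappa(\alpha)$, a nonzero degree-$j$ monomial is a product $w_{i_1}(\alpha)\cdots w_{i_s}(\alpha)$ with $i_1+\cdots+i_s=j$ and each $i_r\in S_\nu^\kappa(\alpha)$. Two constraints pin down the admissible indices: each factor has degree at most $j$ and at most $\kappa$, hence lies in $\{x\in S_\nu^\kappa(\alpha)\mid x\le\mu\}$ with $\mu=\min\{j,\kappa\}$; and since every index is $\ge\nu$, a product of $s$ factors has degree $\ge s\nu$, forcing $1\le s\le\lfloor j/\nu\rfloor$. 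Because cohomology with $\mathbb Z_2$ coefficients is commutative, a monomial is determined by the multiset of its indices, i.e.\ by a partition of $j$ into $s$ parts drawn from $\{x\in S_\nu^\kappa(\alpha)\mid x\le\mu\}$. The number of such partitions is exactly $p(\{x\in S_\nu^\kappa(\alpha)\mid x\le\mu\},s,j)$, and summing over the admissible $s$ gives the right-hand side of \eqref{ineq:uboundbettinum}; since the monomials span $H^j(X)$, the inequality \eqref{ineq:uboundbettinum} follows.

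For the gapless refinement I would transform each summand by a shift of the parts. If $\{x\in S_\nu^\kappa(\alpha)\mid x\le\mu\}=\{\nu,\nu+1,\dots,\mu\}$, then writing each of the $s$ parts as $\nu+y_r$ with $0\le y_r\le\mu-\nu$ turns a partition counted by $p(\{\nu,\dots,\mu\},s,j)$ into a partition of $j-\nu s$ into at most $s$ parts, each $\le\mu-\nu$. By the definition of $p(a,b,c)$ this count equals $p(\mu-\nu,s,j-\nu s)$, which by the recalled identity $p(a,b,c)=b_c(G_{a+b,b})$ equals $b_{j-\nu s}(G_{\mu-\nu+s,s})$. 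Finally I would restrict the summation to the indices giving a nonvanishing term: a partition of $j$ into $s$ parts from $\{\nu,\dots,\mu\}$ exists only when $s\nu\le j\le s\mu$, i.e.\ $j/\mu\le s\le j/\nu$ (the discarded terms with $s<j/\mu$ satisfy $j-\nu s>(\mu-\nu)s=\dim(G_{\mu-\nu+s,s})$, so their Betti numbers vanish anyway), which recovers the summation range in \eqref{ineq:uboundviaGrassmannmanif}.

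The only delicate point is the bookkeeping that matches admissible monomials with partitions: justifying that each factor may be taken with index $\le\mu$ (so the part set is truncated at $\mu$ rather than $\kappa$), that $s$ is bounded by $\lfloor j/\nu\rfloor$, and that the $w_0=1$ factors are kept out of the part count. Beyond this accounting I do not expect a genuine obstacle; the substance of the theorem is precisely that the characteristic-rank hypothesis makes the elementary spanning-set bound available in every degree $j\le t$.
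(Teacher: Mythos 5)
Your proof is correct and follows essentially the same route as the paper's: you bound $b_j(X)$ by counting the spanning monomials in the $w_i(\alpha)$ as partitions of $j$ with parts in $\{x\in S_{\nu}^{\kappa}(\alpha)\mid x\le \mu\}$, and in the gapless case you shift each part by $\nu$ to identify the count with $p(\mu-\nu,s,j-\nu s)=b_{j-\nu s}(G_{\mu-\nu+s,s})$. The only cosmetic differences are that the paper exhibits the inverse of your shift map explicitly (padding with parts equal to $\nu$), while you spell out the vanishing of the terms with $s<j/\mu$ a bit more explicitly than the paper does.
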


\begin{proof}
Let us fix one of the sets $S_{\nu}^{\kappa}(\alpha)$. If $\mathrm{charrank}(\alpha)\geq t$
for some $t$ then, for every $j\in \{1,2, \dots, t\}$, the $\mathbb Z_2$-vector space
$H^j(X)$ is spanned by all the products of the form
\begin{equation}
w_{\nu}^{i_\nu}(\alpha)\cup \cdots \cup w_{\mu}^{i_{\mu}}(\alpha)\in H^j(X).
\label{generatorsofH^j(X)}
\end{equation}
Since $\nu(i_\nu + \dots + i_{\mu}) \leq \nu i_{\nu}+ \dots + \mu i_{\mu} = j$ (thus $i_\nu +
\dots + i_{\mu}\leq \frac{j}{\nu}$), the number of generators of the form
\eqref{generatorsofH^j(X)}, that is, an upper bound for $b_j(X)$, is the number of partitions
of $j$ into at most $\lfloor\frac{j}{\nu}\rfloor$ positive parts each taken from the set
$\{x\in S_{\nu}^{\kappa}(\alpha)\vert x\leq \mu\}$. In other words, this upper bound is
$\sum\limits_{s=1}^{\lfloor\frac{j}{\nu}\rfloor} p(\{x\in S_{\nu}^{\kappa}(\alpha)\vert x\leq
\mu\},s,j)$ as was asserted.

In order to transform \eqref{ineq:uboundbettinum} into \eqref{ineq:uboundviaGrassmannmanif}
when the set $\{x\in S_{\nu}^{\kappa}(\alpha)\vert x\leq \mu\}$ in
\eqref{ineq:uboundbettinum} is gapless, it suffices to know that
\begin{equation}
\sum\limits_{s=1}^{\lfloor\frac{j}{\nu}\rfloor} p(\{\nu, \nu +1, \nu +2, \dots, \mu\},s,j) =
\sum\limits_{s=1}^{\lfloor\frac{j}{\nu}\rfloor} p(\mu-\nu,s,j-\nu s);
\label{formulaviarestrictedpartit} \end{equation} this equality is verified by the following
elementary considerations.

Let $P(j)_{\{\nu,\nu+1,\ldots,\mu\}}^x$ be the set of partitions of $j$ into $x$ positive
parts each taken from the set $\{\nu,\nu+1,\ldots,\mu\}$ and let
$P(l)_{\{1,2,\ldots,\mu-\nu\}}^{\leq x}$ be the set of partitions of $l$ $(l\geq 0)$ into at
most $x$ parts each taken from the set $\{1,2,\ldots,\mu-\nu\}$ (the set
$P(0)_{\{1,2,\ldots,\mu-\nu\}}^{\leq x}$ for all $x>0$ and $\mu-\nu > 0$ contains just one
element, namely the empty partition). Of course, the total number of elements in
$P(l)_{\{1,2,\ldots,\mu-\nu\}}^{\leq x}$ is $p(\mu-\nu,x,l)$.

Each element of $P(j)_{\{\nu,\nu+1,\ldots,\mu\}}^x$ has the form
\begin{align*}
a_1+\cdots+a_{i(\nu)}+a_{i(\nu)+1}+\cdots+a_x,
\end{align*}
where $i(\nu)\geq 0$, $a_1=\cdots =a_{i(\nu)}=\nu<a_{i(\nu)+1}\leq a_{i(\nu)+2}\leq \cdots
\leq a_x$ and $a_1+\cdots+a_{i(\nu)}+a_{i(\nu)+1}+\cdots+a_x=j$. The map
\[
P(j)_{\{\nu,\nu+1,\ldots,\mu\}}^x\longrightarrow P(j-\nu x)_{\{1,2,\ldots,\mu-\nu\}}^{\leq
x},
\]
\[
a_1+\cdots+a_{i(\nu)}+a_{i(\nu)+1}+\cdots+a_x\mapsto
(a_{i(\nu)+1}-\nu)+(a_{i(\nu)+2}-\nu)+\cdots+(a_x-\nu)
\]
is bijective; indeed, the inverse map is readily seen to be
\[
P(j-\nu x)_{\{1,2,\ldots,\mu-\nu\}}^{\leq x}\longrightarrow
P(j)_{\{\nu,\nu+1,\ldots,\mu\}}^x,
\]
\[
b_1+b_2+\cdots+b_{x-i} \mapsto \underbrace{\nu+\cdots+\nu}_{i\text{
times}}+(b_1+\nu)+\cdots+(b_{x-i}+\nu).
\]
Thus \eqref{formulaviarestrictedpartit} is verified, and the proof of Theorem
\ref{th:uboundbnum} is finished.
\end{proof}

\begin{example}\label{illustrationofthmandsharpness}
We recall (\cite[Theorem 7.1]{milnorstasheff}) that the cohomology algebra of the infinite
Grassmannian $G_{\infty,k}$ can be identified with a polynomial algebra,
$$H^\ast(G_{\infty,k})=\mathbb Z_2[w_1, \dots, w_k],$$
where $w_i\in H^i(G_{\infty,k})$ is the $i$th Stiefel-Whitney class of the universal
$k$-plane bundle $\gamma_{\infty,k}$. Thus $\mathrm{charrank}(\gamma_{\infty,k})=\infty$ and
we may take $S_{\nu}^{\kappa}(\gamma_{\infty,k})=\{1=\nu,2,\dots, k=\kappa\}$. Since for
$X=G_{\infty,k}$ there are no relations among the generators of the form
\eqref{generatorsofH^j(X)}, inequalities \eqref{ineq:uboundbettinum} and
\eqref{ineq:uboundviaGrassmannmanif} turn into one of the following equalities for any
positive integer $j$:
\begin{equation} b_j(G_{\infty,k})= p(j)=\sum_{s=1}^{j}
b_{j-s}(G_{j-1+s,s}) \textrm{\,\,if\,\,} j\leq k, \label{equalityforjleqk}\end{equation}
\begin{equation} b_j(G_{\infty,k})= \sum\limits_{s=1}^{j} p(\{1,2,\dots,k\},s,j)
=\sum_{s=\lceil\frac{j}{k}\rceil}^{j} b_{j-s}(G_{k-1+s,s}) \textrm{\,\,if\,\,} j > k.
\label{equalityforj>k}\end{equation} In a similar way, one can see that
\eqref{ineq:uboundbettinum} and \eqref{ineq:uboundviaGrassmannmanif} are also sharp for
$X=\tilde G_{\infty,k}$, the infinite oriented Grassmannian.
\end{example}

\begin{ack}The authors thank Professor James Stasheff for useful comments on a version of this
paper. Part of this research was carried out while J. Korba\v s was a member of the research
teams 1/0330/13 and 2/0029/13 supported in part by the grant agency VEGA (Slovakia).\end{ack}

{\small \hspace{2cm} L{\!\!'}. Balko, Faculty of Mechanical Engineering of SUT,

\hspace{2cm}  N\'amestie slobody 17, SK-812 31 Bratislava, Slovakia

\hspace{2cm} (\url{ludovit.balko@gmail.com}) \\

{\small \hspace{2cm} J. Korba\v s, Faculty of Mathematics, Physics, and Informatics,

\hspace{2cm}  Comenius University, Mlynsk\'a dolina,

\hspace{2cm} SK-842 48 Bratislava, Slovakia {\em and}

\hspace{2cm} {Mathematical Institute of SAS,

\hspace{2cm}  \v Stef\'anikova 49, SK-814 73 Bratislava, Slovakia}

\hspace{2cm} (\url{korbas@fmph.uniba.sk})

\end{document}